\numberwithin{equation}{section}
\theoremstyle{plain}
\newtheorem{thm}{Theorem}[section]
\newtheorem{lem}[thm]{Lemma}
\newtheorem{prop}[thm]{Proposition}
\theoremstyle{definition}
\newtheorem{defn}[thm]{Definition}
\newtheorem{remark}[thm]{Remark}
\newcommand\R{{\mathbb R}}
\newcommand\N{{\mathbb N}}
\newcommand\Rn{{{\mathbb R}^n}}
\newcommand\va{\varphi}
\newcommand\pa{\partial}
\DeclareMathOperator\loc{loc}
\DeclareMathOperator\Lip{Lip}
\DeclareMathOperator\real{Re}
\title[Lower bound for the life span of the Kirchhoff equation]{A lower bound for the life span of solutions to the Kirchhoff equation with Gevrey data}
\author[Tokio Matsuyama]{Tokio Matsuyama}
\address{
Tokio Matsuyama:
 \endgraf
Department of Mathematics
\endgraf
Chuo University
\endgraf
1-13-27, Kasuga, Bunkyo-ku
\endgraf
Tokyo 112-8551
\endgraf
Japan
\endgraf
{\it E-mail address}: {\rm tokio@math.chuo-u.ac.jp}
}
\author[Lenny Neyt]{Lenny Neyt}
\address{
  Lenny Neyt:
  \endgraf
Department of Mathematics: Analysis, Logic and Discrete Mathematics
\endgraf
Ghent University
\endgraf
 Belgium  
\endgraf
{\it E-mail address}: {\rm Lenny.Neyt@UGent.be}
  }
\thanks{
T. Matsuyama was supported by Grant-in-Aid for Scientific Research (C) (No. 18K03377), Japan Society for the Promotion of Science.}
\thanks{
L. Neyt gratefully acknowledges support by FWO-Vlaanderen through the postdoctoral grant 12ZG921N}
\subjclass[2010]{Primary 35L20; Secondary 35L72;}
\keywords{Kirchhoff equation; Gevrey space; life span}
\begin{document} 

\begin{abstract}
We provide a new lower bound for the life span of solutions to the Kirchhoff equation for which the initial data belongs to the Gevrey space. This lower bound strictly improves the classical one in the case when the frequency spectrum of the initial data is concentrated at the origin. 
\end{abstract}

\maketitle

\section{Introduction}
\label{sec:sec 1}
In this article, we concern ourselves with Kirchhoff-type equations of the form
\begin{equation}\label{EQ:Kirchhoff}
\left\{
\begin{aligned}
& \pa^2_t u-\va\left(\int_\Rn |\nabla u|^2\, dx \right)\Delta u=0, 
& \quad t>0, \quad x\in \Rn,\\
& u(0,x)=u_0(x), \quad \pa_t u(0,x)=u_1(x), &\quad x \in \mathbb{R}^n,
\end{aligned}\right.
\end{equation}
where we always assume that $\va(\rho)$ is a locally Lipschitz function on $[0,\infty)$ for which there exists a real $\nu_0>0$ such that 
\begin{equation}\label{EQ:lower bound}
\text{$\va(\rho)\ge \nu_0$ \quad for all $\rho\ge0$.}
\end{equation}
In 1876, Kirchhoff \cite{Kirchhoff} proposed the special case of
\[
n=1, \quad \va(\rho)=\nu_0+a\rho \quad (\nu_0,a>0),
\]
for the equation \eqref{EQ:Kirchhoff} to describe the transversal motions of the elastic string.
When looking at the general case, several authors have investigated the global existence for the Kirchoff-type equations when the initial data is real analytic.
In 1940, Bernstein \cite{Bernstein} first studied the global existence for analytic data in one space dimension. 
After him, in 1975, Pohozaev \cite{Pohozhaev} extended Bernstein's result to several space dimensions. 
Later, the global solvability in the real analytic class was studied by D'Ancona and Spagnolo \cite{Dancona-Invent} (see also \cite{Arosio}) under the additional assumption that  
\[
\text{$\va$ is continuous on $[0,\infty)$,}  \quad
\va(\rho)\ge 0 ,
\qquad \text{for all } \rho\geq 0 .
\]
Kajitani and Yamaguti \cite{Kajitani-Pisa} obtained the same result under a more general principal term. 

It is of course natural to ask whether the Cauchy problem \eqref{EQ:Kirchhoff} admits a unique global solution with initial data in larger function spaces, such as e.g. the quasi-analytic class or Sobolev spaces. 
The global solvability for quasi-analytic data was studied by Nishihara \cite{Nishihara} and Ghisi and Gobbino \cite{Ghisi}. Manfrin \cite{Manfrin-JDE} discovered spectral gap data which assure global solvability of the Kirchhoff equation. It should be noted that the space in \cite{Ghisi,Nishihara} is included in the Gevrey spaces. 

It has been a long-standing open problem whether or not, one can prove the existence of time global solutions in the Sobolev spaces 
\[
H^\sigma(\Rn)=(1-\Delta)^{-\frac{\sigma}{2}}L^2(\Rn) , \qquad \sigma \geq 1, 
\] 
without smallness condition on the initial data. In fact, the existence of local solutions in low regular Sobolev spaces, say, $H^\sigma\times H^{\sigma-1}$, $\sigma\in [1,3/2)$, is still not known. The main idea of the proof of the global existence of high regular solutions is to obtain boundedness of the local solutions in the $H^{3/2}$-norm at the life span. On the one hand, the main difficulty lies in controlling an intensive oscillation of the coefficient $\va(\|\nabla u(t)\|^2_{L^2})$. On the other hand, when the data is very small, one can overcome such an oscillation problem to get global solutions (see \cite{MR-Liouville} and the references therein). 
However, if one does not impose extra conditions, no results have been obtained as of yet.

As an intermediate step before considering the global solvability of the Kirchhoff equation, it is interesting to look at the existence of a life span with respect to certain initial data.
In \cite{Arosio-Garavaldi} it is shown that for any nontrivial $(u_0, u_1) \in H^{\sigma}(\Rn) \times H^{\sigma}(\Rn)$, $\sigma \geq 3/2$, there exists a life span $T_m = T_m(u_0, u_1) > 0$ such that \eqref{EQ:Kirchhoff} admits a unique maximal solution $u(t, x) \in \bigcap_{j=0,1} C^{j}([0, T_m); H^{\sigma - 1}(\Rn))$. Note that $T_m = + \infty$ corresponds to \eqref{EQ:Kirchhoff} being globally solvable for the initial data $(u_0, u_1)$. Now, if we put
	\begin{gather}
		\label{eq:Lambda}
		\Lambda :=  \nu_{0}^{-1} \left( \int_{0}^{\|\nabla u_{0}\|_{L^{2}}^{2}} \varphi(\rho) d\rho +  \| \partial_{t} u_{1} \|_{L^{2}}^{2} \right)  , \\   
		\label{eq:MandL}
		M := \sup_{\rho \in [0, \Lambda]} \varphi(\rho) , \quad \text{and} \quad L := \sup_{\rho_{1}, \rho_{2} \in [0, \Lambda]} \frac{|\varphi(\rho_{2}) - \varphi(\rho_{1})|}{|\rho_{2} - \rho_{1}|} ,
	\end{gather} 
then the following classical lower bound for $T_m$ was found in \cite[Equation (2.13)]{Arosio-Garavaldi}:
	\begin{equation} \label{eq:classical lower bound}
		T_m \geq \frac{\nu_0^{3/2}}{4L \mathcal{E}_{3/2}(u; 0)} ,
	\end{equation}
where $\mathcal{E}_{3/2}(u; t)$ is the energy of order $3/2$ of the solution (see \eqref{eq:energy of order 3/2}).

In this paper, we will consider the case where the initial data is contained in the Gevrey spaces, which lie in between the real analytic class and the Sobolev spaces. For $s\geq1$, we denote by $\gamma^s_{L^2}(\Rn)$ the Roumieu-Gevrey space of order $s$ on $\Rn$, 
\[
\gamma^s_{L^2}(\Rn)=\bigcup_{\eta>0}\gamma^s_{\eta,L^2}(\Rn),
\]
endowed with its natural $(LB)$-space topology, where $f$ belongs to $\gamma^s_{\eta,L^2}(\Rn)$ if 
\[
\|f\|_{\gamma^s_{\eta,L^2}}
=\left(\int_\Rn e^{\eta|\xi|^{\frac{1}{s}}}|(\mathcal{F}f)(\xi)|^2\, d\xi\right)^{\frac12} < \infty ;
\]
here $(\mathcal{F}f)(\xi)$ stands for the Fourier transform of $f(x)$.
If $f, g \in \gamma^s_{\eta,L^2}$, we also consider the norm
\[
\|(f,g)\|_{\gamma^s_{\eta,L^2}\times \gamma^s_{\eta,L^2}}
=\sqrt{\|f\|^2_{\gamma^s_{\eta,L^2}}+\|g\|^2_{\gamma^s_{\eta,L^2}}} .
\]
Note that in the particular case $s=1$, $\gamma^1_{L^2}(\Rn)$ is exactly the real analytic class, and its global existence was proved by Bernstein \cite{Bernstein} for $n = 1$ and by Pohozaev \cite{Pohozhaev} for $n \geq 2$. For $s > 1$, the well-posedness of the Kirchhoff equation with initial data in $\gamma^{s}_{L^{2}}(\Rn)$ was first considered in \cite{MR-JAM}. Here, we will provide an explicit lower bound for $T_m$ in function of the Gevrey norm of the initial data. In fact, we have the following result.

\begin{thm} \label{thm:Lower bound Tm}
Suppose that $\va(\rho)$ is a locally Lipschitz function on $[0,\infty)$ satisfying the non-degeneracy condition \eqref{EQ:lower bound}.
Let $s>1$ and suppose $(u_0,u_1) \in \gamma_{L^2}^{s}(\Rn) \times \gamma_{L^2}^{s}(\Rn)$. If, for  $\eta > 2M\nu_{0}^{-1}$, we have $((-\Delta)^{\frac{3}{4}} u_{0}, (-\Delta)^{\frac{1}{4}} u_{1}) 
\in \gamma_{\eta, L^{2}}^{s}(\Rn) \times \gamma_{\eta, L^{2}}^{s}(\Rn)$, then, we have the lower bound
	\begin{equation}
		\label{eq:Lower bound Tm}
		T_{m} \geq \left[ \frac{\min(\nu_0, 1)}{\max(M, 1)} \frac{e^{-2 \nu_0^{-1} M}}{2sL} \frac{\nu_0 \eta - 2M}{\left\| \left( (-\Delta)^{\frac{3}{4}} u_{0}, (-\Delta)^{\frac{1}{4}} u_{1} \right) \right\|_{\gamma_{\eta, L^{2}}^{s} \times \gamma_{\eta, L^{2}}^{s}}^{2}} \right]^{\frac{s}{s + 1}} .
	\end{equation}
\end{thm}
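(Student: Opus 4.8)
The plan is to pass to the Fourier side, where \eqref{EQ:Kirchhoff} becomes a family of scalar oscillators with a common time-dependent coefficient, and to run a weighted energy estimate whose Gevrey radius is allowed to decrease in time; the life span is then read off as the first instant at which this radius would have to be pushed below the critical level $\nu_0^{-1}M$. First I set $c(t)^2:=\va\big(\|\nabla u(t)\|_{L^2}^2\big)$ and $v(t,\xi):=(\mathcal{F}u)(t,\xi)$, so that \eqref{EQ:Kirchhoff} reads $\pa_t^2 v+c(t)^2|\xi|^2 v=0$ for each fixed $\xi$, with $c(t)^2\ge\nu_0$ by \eqref{EQ:lower bound}. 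The indispensable a priori bound comes from the conserved energy $H(t):=\tfrac12\|\pa_t u\|_{L^2}^2+\tfrac12\Psi\big(\|\nabla u\|_{L^2}^2\big)$, where $\Psi(\rho):=\int_0^\rho\va(\sigma)\,d\sigma$: a direct computation using the equation gives $H'(t)=0$, and since $\Psi(\rho)\ge\nu_0\rho$ this yields $\|\nabla u(t)\|_{L^2}^2\le\Lambda$ throughout the existence interval. Hence along the flow $\nu_0\le c(t)^2\le M$ and $\big|c(t_2)^2-c(t_1)^2\big|\le L\big|\,\|\nabla u(t_2)\|_{L^2}^2-\|\nabla u(t_1)\|_{L^2}^2\,\big|$, so that the quantities $M$ and $L$ of \eqref{eq:MandL} are precisely those governing the coefficient.

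Next I introduce the energy carrying a moving Gevrey weight,
\[
E_\Phi(t):=\int_{\Rn} e^{2\Phi(t)|\xi|^{1/s}}\Big(|\xi|\,|\pa_t v|^2+c(t)^2|\xi|^3|v|^2\Big)\,d\xi ,
\]
where $\Phi$ is nonincreasing with $\Phi(0)=\eta/2$; the exponents are chosen so that, writing $N^2:=\big\|\big((-\Delta)^{\frac34}u_0,(-\Delta)^{\frac14}u_1\big)\big\|^2_{\gamma^s_{\eta,L^2}\times\gamma^s_{\eta,L^2}}$, one has $\min(\nu_0,1)\,N^2\le E_\Phi(0)\le\max(M,1)\,N^2$. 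Differentiating and inserting the oscillator equation, the terms involving $v'\bar v$ cancel and only the time derivative of the coefficient survives, leaving
\[
E_\Phi'(t)=2\Phi'(t)\int_{\Rn}|\xi|^{1/s}e^{2\Phi|\xi|^{1/s}}\big(|\xi|\,|\pa_t v|^2+c^2|\xi|^3|v|^2\big)\,d\xi+2c(t)c'(t)\int_{\Rn}e^{2\Phi|\xi|^{1/s}}|\xi|^3|v|^2\,d\xi .
\]
The first term is $\le0$ because $\Phi'\le0$, and it carries an extra factor $|\xi|^{1/s}$; the second term, driven by the oscillation of the coefficient, is the enemy.

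The core of the argument is to dominate the coefficient term by the favourable one. Using $|2cc'|=\big|\tfrac{d}{dt}c(t)^2\big|\le L\big|\tfrac{d}{dt}\|\nabla u\|_{L^2}^2\big|\le 2L\int_{\Rn}|\xi|^2|\pa_t v||v|\,d\xi$, I split the last integral at a frequency $|\xi|=R$: the low-frequency part is controlled through the conserved quantities $\|\pa_t u\|_{L^2}$ and $\|\nabla u\|_{L^2}$ — this is exactly where concentration of the spectrum near the origin produces a gain — while the high-frequency part is absorbed into the favourable term via $|\xi|^{1/s}\ge R^{1/s}$. Optimising $R=R(t)$ in the resulting inequality forces the radius to decrease at a rate comparable to $L\,E_\Phi(t)\,t^{1/s}$, which in a bootstrap keeps $E_\Phi$ comparable to $E_\Phi(0)$ and, upon integration, spends Gevrey radius at the cumulative rate $\tfrac\eta2-\Phi(t)\sim\text{const}\cdot t^{(s+1)/s}$. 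The life span is reached when the total radius lost equals the available budget $\tfrac\eta2-\nu_0^{-1}M=\tfrac{1}{2\nu_0}(\nu_0\eta-2M)$, which is positive precisely because $\eta>2M\nu_0^{-1}$; solving $\text{const}\cdot T^{(s+1)/s}=\tfrac{1}{2\nu_0}(\nu_0\eta-2M)$ for $T$ produces the exponent $s/(s+1)$ and the explicit constant of \eqref{eq:Lower bound Tm}, with $e^{-2\nu_0^{-1}M}$ entering as the reciprocal of the worst-case per-mode amplification tolerated before the floor $\nu_0^{-1}M$ is reached.

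The hard part is the frequency-localised estimate for the coefficient term: quantitatively trading the polynomial weight $|\xi|^2$ against the subexponential weight $e^{2\Phi|\xi|^{1/s}}$ and the conserved low-frequency mass so as to extract exactly the rate $t^{1/s}$ with fully explicit constants. Everything else — the conservation law, the sign of the favourable term, and the final integration of the radius equation — is comparatively routine bookkeeping.
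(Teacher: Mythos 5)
Your strategy --- a direct shrinking-radius Gevrey energy estimate performed on the nonlinear solution itself --- is genuinely different from the paper's proof, which is a contradiction argument: if \eqref{eq:Lower bound Tm} fails, that failure is exactly condition \eqref{eq.eta} with $q=(s+1)/s$ and $K$ as in \eqref{eq:K}, so Proposition \ref{prop:G-Colombini} (a Colombini--Del Santo--Kinoshita type estimate for the linear problem \eqref{EQ:Linear}, whose engine is the frequency-dependent freezing of the coefficient on the terminal window of length $|\xi|^{-1/(qs-s)}=|\xi|^{-1}$) solves \eqref{EQ:Linear} up to \emph{and including} $T_m$ for every coefficient in the class $\mathcal{K}$ with $|c'(t)|\le K(T_m-t)^{-(s+1)/s}$; a Schauder--Tychonoff fixed point of $c\mapsto\varphi^{*}\left(\|\nabla v(t)\|_{L^2}^2\right)$ then yields a nonlinear solution on the closed interval $[0,T_m]$, contradicting the blow-up alternative. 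Your sketch, however, has a genuine gap at precisely the step you flag as ``the hard part,'' and as described that step fails. After splitting at $|\xi|=R(t)$, the low-frequency piece of the coefficient term is \emph{not} controlled by the conserved quantities alone: it still carries the weight $e^{2\Phi(t)|\xi|^{1/s}}$, so the best available bound is of the form $|(c^2)'(t)|\,R\,e^{2\Phi(t)R^{1/s}}\Lambda$, with an exponential factor in $R$. Meanwhile, absorbing the high frequencies into the favourable term requires $|\Phi'(t)|\gtrsim |(c^2)'(t)|/\bigl(2\nu_0 R(t)^{1/s}\bigr)$, so your claimed spending rate $L\,E_\Phi(t)\,t^{1/s}$ forces $R(t)\sim 1/t$, which makes the low-frequency remainder of size $\sim L E_\Phi\, t^{-1}e^{\eta t^{-1/s}}\Lambda$ --- divergent as $t\to 0^{+}$, and in any case carrying constants of order $e^{\eta R^{1/s}}$ that obliterate the explicit factors $e^{-2\nu_0^{-1}M}$, $2sL$ and $\nu_0\eta-2M$ in \eqref{eq:Lower bound Tm}. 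If instead you bound the coefficient term crudely by $\nu_0^{-1}|(c^2)'|\,E_\Phi\lesssim L E_\Phi^2$, the resulting Riccati inequality merely recovers the classical bound \eqref{eq:classical lower bound}. Likewise, your ``floor'' $\nu_0^{-1}M$ for the radius has no mechanism in the scheme: in the paper the quantity $2M\nu_0^{-1}$ is the radius paid for the frozen-coefficient correction $\nu_0^{-1}|c_*(t,\xi)-c(t)||\xi|$ on the terminal window, a device entirely absent from your argument.

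There is also a structural reason the authors do not estimate the nonlinear solution directly: near the unknown time $T_m$ there is no a priori pointwise bound on $\bigl|\frac{d}{dt}\varphi\left(\|\nabla u(t)\|_{L^2}^2\right)\bigr|$. Your bootstrap is self-referential --- $|(c^2)'|\lesssim L E_\Phi$ holds only while $\Phi\geq 0$ and $E_\Phi$ is under control, and $E_\Phi$ is under control only while the radius has not been spent --- and the constants with which it must close are exactly the unproved ones. The paper transfers this difficulty to the linear level: the class $\mathcal{K}$ encodes the permitted singular growth $K(T_m-t)^{-(s+1)/s}$, the image of the map $\Theta$ in fact satisfies the much stronger uniform bound $K/T_m^{(s+1)/s}$ (Lemma \ref{l:ThetaContinuous}), and convexity--compactness of $\mathcal{K}$ (Lemma \ref{l:KConvexCompact}) plus Schauder--Tychonoff supply the fixed point. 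Your Fourier reduction, the conservation law, the sign of the favourable term, and the scaling heuristics (budget $\nu_0\eta-2M$, exponent $s/(s+1)$) are all consistent with the theorem, but the central frequency-localized inequality is conjectured rather than proved, and the specific optimization you propose does not produce it.
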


Depending on the data, the lower bound given in \eqref{eq:Lower bound Tm} will be strictly larger than the one classically given by \eqref{eq:classical lower bound} (see Remark \ref{remark: improvement lower bound}). This seems to be especially the case when the frequency spectrum of the initial data is concentrated at the origin. Moreover, we also mention that our proof could be adapted, similarly as in \cite{MR-JAM}, to find an analogous result for the initial-boundary value problems of the Kirchhoff equation with initial data in the Gevrey class. We have organized the paper as follows: We first state some known results on local existence theorems in Section \ref{sec:local existence theorems}, after which we prove our main result in Section \ref{sec:proof of main theorem}. 

\section{Local existence theorems}
\label{sec:local existence theorems}

In the context of the Sobolev spaces, the Kirchhoff equation has a first integral.

\begin{lem} \label{lem:Energy}
Let $T>0$. Assume that, for some $\sigma \geq 3/2$, $u\in \displaystyle{\bigcap_{j=0}^1}\, C^j([0,T];H^{\sigma-j}(\Rn))$ is the solution to \eqref{EQ:Kirchhoff}.
If we define the energy
\[
\mathcal{H}(u;t):=
\|\pa_t u(t)\|^2_{L^2}+\int^{\|\nabla u(t)\|^2_{L^2}}_0 \va(\rho)\, d\rho ,
\]
then, we have 
\begin{equation}\label{eq.cons}
\mathcal{H}(u;t)=\mathcal{H}(u;0) \quad \text{for all $t\in [0,T]$}.
\end{equation}
\end{lem}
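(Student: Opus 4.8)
The plan is to show that $t \mapsto \mathcal{H}(u;t)$ is continuously differentiable on $[0,T]$ with vanishing derivative; the conservation law \eqref{eq.cons} then follows by integration. Write $\psi(t) := \|\nabla u(t)\|_{L^2}^2$, so that $\mathcal{H}(u;t) = \|\pa_t u(t)\|_{L^2}^2 + \int_0^{\psi(t)} \va(\rho)\, d\rho$. Differentiating the two terms formally gives
\[
\frac{d}{dt}\mathcal{H}(u;t) = 2\real\langle \pa_t^2 u, \pa_t u\rangle + \va(\psi(t))\,\psi'(t),
\]
where the first summand comes from the kinetic energy and the second from the fundamental theorem of calculus applied to $\rho \mapsto \int_0^\rho \va$.

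Next I would evaluate the two contributions using the equation. Substituting $\pa_t^2 u = \va(\psi(t))\Delta u$ from \eqref{EQ:Kirchhoff} into the first term yields $2\va(\psi(t))\real\langle\Delta u, \pa_t u\rangle$, while integration by parts gives $\psi'(t) = 2\real\langle \nabla\pa_t u, \nabla u\rangle = -2\real\langle \pa_t u, \Delta u\rangle$. Since $\real\langle\Delta u,\pa_t u\rangle = \real\langle\pa_t u,\Delta u\rangle$, the two contributions cancel exactly, so $\frac{d}{dt}\mathcal{H}(u;t) = 0$.

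The cleanest way to make this rigorous is to pass to the Fourier side. With $\widehat u(t,\xi)$ the spatial Fourier transform, \eqref{EQ:Kirchhoff} becomes the family of scalar ODEs $\pa_t^2\widehat u + \va(\psi(t))|\xi|^2\widehat u = 0$, where $\psi(t) = \int_\Rn |\xi|^2|\widehat u(t,\xi)|^2\, d\xi$ by Plancherel. Multiplying by $\overline{\pa_t\widehat u}$ and taking real parts gives, for each fixed $\xi$, the pointwise identity
\[
\tfrac12\pa_t|\pa_t\widehat u|^2 + \tfrac12\va(\psi(t))|\xi|^2\pa_t|\widehat u|^2 = 0 .
\]
Integrating in $\xi$ and differentiating under the integral sign then reproduces $\frac{d}{dt}\mathcal{H}(u;t) = 0$ directly, bypassing any distributional pairing.

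The main obstacle is precisely the justification of these manipulations, and this is where the hypothesis $\sigma\geq 3/2$ enters. The pairing $\langle\pa_t^2 u, \pa_t u\rangle$ involves $\pa_t^2 u = \va(\psi)\Delta u \in C([0,T];H^{\sigma-2})$ and $\pa_t u \in C([0,T];H^{\sigma-1})$, and is finite exactly because $(\sigma-2)+(\sigma-1)\geq 0 \iff \sigma\geq 3/2$; equivalently, on the Fourier side the integrand $|\xi|^2\widehat u\,\overline{\pa_t\widehat u}$ is integrable since $|\xi|^2 = |\xi|^{3/2}\cdot|\xi|^{1/2}$ with $|\xi|^{3/2}\widehat u$ and $|\xi|^{1/2}\pa_t\widehat u$ both in $L^2$ under $\sigma \geq 3/2$. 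To differentiate under the integral sign I would dominate the difference quotients using the continuity of $\pa_t u$ and $\pa_t^2 u$ into the respective Sobolev spaces, together with the local Lipschitz continuity of $\va$, which makes $t\mapsto\va(\psi(t))$, and hence $t\mapsto\int_0^{\psi(t)}\va$, continuously differentiable once $\psi\in C^1([0,T])$. Assembling these estimates confirms that each term is $C^1$ in $t$ and that the formal cancellation is legitimate, completing the proof.
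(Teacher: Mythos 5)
Your proof is correct and takes essentially the same route as the paper, whose entire argument is the one-liner ``multiplying \eqref{EQ:Kirchhoff} by $\pa_t u$ and integrating over $\Rn$ gives $\frac{d}{dt}\mathcal{H}(u;t)=0$''; your version merely supplies the rigor (the Fourier-side justification and the role of $\sigma\geq 3/2$ in making the pairing $\langle \Delta u,\pa_t u\rangle$ finite) that the paper calls straightforward. One small wording slip: local Lipschitz continuity of $\va$ makes $t\mapsto\va(\psi(t))$ only continuous (indeed locally Lipschitz), not necessarily continuously differentiable, but continuity is all you need, since $\rho\mapsto\int_0^{\rho}\va(\tau)\,d\tau$ is $C^1$ whenever $\va$ is continuous, and the chain rule with $\psi\in C^1$ then gives the asserted derivative $\va(\psi(t))\psi'(t)$.
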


\begin{proof}
The proof is straightforward:  Multiplying \eqref{EQ:Kirchhoff} by $\pa_t u$ and integrating over $\Rn$ gives
\[
\frac{d}{dt}\mathcal{H}(u;t)=0,
\]
as desired.
\end{proof}

For $\sigma \in \R$, we denote the homogeneous counterpart of the fractional Sobolev spaces by
\[
\dot{H}^\sigma(\Rn)=(-\Delta)^{-\frac{\sigma}{2}}L^2(\Rn).
\]
We now define the energy of order $3/2$ for any $u\in {\bigcap_{j=0}^1}\, C^j([0,T]; \dot{H}^{\sigma-j}(\Rn))$ as follows:
\begin{equation} \label{eq:energy of order 3/2}
	\mathcal{E}_{3/2}(u;t) = \va \left(\|\nabla u(t)\|^2_{L^2}\right) \|u(t)\|^2_{\dot{H}^{\frac32}} + \|\pa_t u(t)\|^2_{\dot{H}^{\frac12}}.
\end{equation}

The following result was shown by Arosio and Garavaldi.

\begin{thm}[{\cite[Theorem 2]{Arosio-Garavaldi}}] \label{thm:LocalExistenceSobolev}
Suppose that $\va(\rho)$ is a locally Lipschitz function on $[0,\infty)$ satisfying the non-degeneracy condition \eqref{EQ:lower bound}. Let $\sigma\geq 3/2$. Then for any nontrivial $(u_0,u_1)\in H^\sigma(\Rn) \times H^{\sigma-1}(\Rn)$, there exists a life span $T_m=T_m(u_0,u_1)>0$ depending only on $\mathcal{H}(u;0)$ and $\mathcal{E}_{3/2}(u;0)$ such that the Cauchy problem \eqref{EQ:Kirchhoff} admits a unique maximal solution $u(t,x)$ in the class  
$$u\in C([0,T_m);H^\sigma(\Rn)) \cap C^1([0,T_m);H^{\sigma-1}(\Rn)),$$ 
and one of the following statements is true{\rm :}
\begin{enumerate}
\item[(i)] $T_m=+\infty${\rm ;}
\item[(ii)] $T_m<+\infty$ and $\displaystyle{\limsup_{t\to T_m^{-}}}\,\mathcal{E}_{3/2}(u;t)=+\infty.$ 
\end{enumerate}
\end{thm}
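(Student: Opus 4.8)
The plan is to treat \eqref{EQ:Kirchhoff} as a quasilinear hyperbolic problem and to decouple its self-referential structure. Setting $c(t)^2 = \va\bigl(\|\nabla u(t)\|_{L^2}^2\bigr)$, the equation becomes the linear wave equation $\pa_t^2 u - c(t)^2 \Delta u = 0$ with a coefficient that is spatially constant but depends on the solution through $\|\nabla u(t)\|_{L^2}^2$. I would first solve the problem for a prescribed coefficient, then recover a genuine solution by a fixed-point argument, and finally close everything with energy estimates, the decisive one being for $\mathcal{E}_{3/2}$.

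For the linear theory, given a continuous $c(t)$ with $c(t)^2 \geq \nu_0$, the Cauchy problem $\pa_t^2 v - c(t)^2 \Delta v = 0$, $v(0) = u_0$, $\pa_t v(0) = u_1$, is solved frequencywise, since $w(t) = (\mathcal{F}v)(t,\xi)$ obeys $\ddot{w} + c(t)^2 |\xi|^2 w = 0$. Elementary ODE theory produces a unique solution in $C([0,T];\dot{H}^\sigma) \cap C^1([0,T];\dot{H}^{\sigma-1})$ that depends continuously on the data and on $c$. For local existence I would then run a fixed point on a closed ball of $C([0,T])$ consisting of candidate functions $\rho(t)$ with $\rho(0) = \|\nabla u_0\|_{L^2}^2$ taking values in $[0,\Lambda]$: set $\Phi(\rho)(t) = \|\nabla v(t)\|_{L^2}^2$, where $v$ is the linear solution associated with $c(t)^2 = \va(\rho(t))$. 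The local Lipschitz bound $L$ for $\va$ on $[0,\Lambda]$, combined with the continuous dependence above, makes $\Phi$ a contraction for $T$ small, and uniqueness follows from a Gronwall estimate comparing two solutions. The threshold $\sigma \geq 3/2$ enters precisely because this is the regularity at which the coefficient can be controlled, as the next step shows.

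The core of the argument is the a priori control that upgrades a local solution to a maximal one with the stated dichotomy. Two energies cooperate. The conserved energy $\mathcal{H}$ of Lemma \ref{lem:Energy}, together with $\va \geq \nu_0$, gives $\nu_0 \|\nabla u(t)\|_{L^2}^2 \leq \mathcal{H}(u;0)$, so $\|\nabla u(t)\|_{L^2}^2$ remains in the fixed interval $[0,\Lambda]$ for all time; consequently $c(t)^2 \in [\nu_0, M]$ with Lipschitz constant $L$. For higher regularity I would differentiate $\mathcal{E}_{3/2}$: inserting $\pa_t^2 u = -c^2 (-\Delta) u$ and using the self-adjointness of $(-\Delta)^{s}$, the two top-order contributions coming from $c^2 \tfrac{d}{dt}\|u\|_{\dot{H}^{3/2}}^2$ and from $\tfrac{d}{dt}\|\pa_t u\|_{\dot{H}^{1/2}}^2$ cancel exactly, leaving
\[
\frac{d}{dt}\mathcal{E}_{3/2}(u;t) = \frac{d}{dt}\bigl(c(t)^{2}\bigr)\,\|u(t)\|_{\dot{H}^{3/2}}^{2}.
\]
This cancellation is the structural reason the scheme works. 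It remains to bound the oscillation of the coefficient: writing $\tfrac{d}{dt}\|\nabla u\|_{L^2}^2 = 2\langle (-\Delta)^{3/4} u, (-\Delta)^{1/4}\pa_t u\rangle$ and applying Cauchy--Schwarz yields $\bigl|\tfrac{d}{dt}\|\nabla u\|_{L^2}^2\bigr| \leq 2\|u\|_{\dot{H}^{3/2}}\|\pa_t u\|_{\dot{H}^{1/2}}$, where the exponent $3/2$ is exactly what lets one split $-\Delta = (-\Delta)^{3/4}(-\Delta)^{1/4}$ symmetrically. With $\|u\|_{\dot{H}^{3/2}}^2 \leq \nu_0^{-1}\mathcal{E}_{3/2}$ and $\|\pa_t u\|_{\dot{H}^{1/2}}^2 \leq \mathcal{E}_{3/2}$, the Lipschitz bound gives $\bigl|\tfrac{d}{dt}\bigl(c^2\bigr)\bigr| \lesssim L\nu_0^{-1/2}\mathcal{E}_{3/2}$ and hence the Riccati inequality $\tfrac{d}{dt}\mathcal{E}_{3/2} \lesssim L\nu_0^{-3/2}\mathcal{E}_{3/2}^2$.

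Finally I would propagate this control to general $\sigma \geq 3/2$: since $c(t)^2$ is bounded above and below and $\tfrac{d}{d\tau}c(\tau)^2$ is controlled by $\mathcal{E}_{3/2}$, the standard linear energy estimate for $\pa_t^2 u - c(t)^2\Delta u = 0$ bounds $\|u(t)\|_{\dot{H}^\sigma}^2 + \|\pa_t u(t)\|_{\dot{H}^{\sigma-1}}^2$ by $\exp\bigl(C\int_0^t |\tfrac{d}{d\tau}c(\tau)^2|\,d\tau\bigr)$ times its initial value, a quantity finite whenever $\mathcal{E}_{3/2}$ stays bounded on $[0,T]$. Hence the solution extends as long as $\mathcal{E}_{3/2}$ remains finite, which produces the maximal time $T_m$ and forces $\limsup_{t\to T_m^{-}}\mathcal{E}_{3/2}(u;t) = +\infty$ whenever $T_m < +\infty$. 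The main obstacle throughout is the self-referential coefficient $c(t)^2 = \va\bigl(\|\nabla u(t)\|_{L^2}^2\bigr)$: the entire scheme hinges on the exact cancellation in the $\mathcal{E}_{3/2}$ identity and on the fact that $\dot{H}^{3/2}\times\dot{H}^{1/2}$ is precisely the regularity at which $\tfrac{d}{dt}\|\nabla u\|_{L^2}^2$ is dominated by the energy itself.
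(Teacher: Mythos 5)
The paper does not prove this theorem at all: it is quoted as \cite[Theorem 2]{Arosio-Garavaldi}, so there is no internal proof to compare yours against. Judged on its own merits, your sketch reconstructs essentially the classical argument behind that citation, and its quantitative core is correct: the exact cancellation $\frac{d}{dt}\mathcal{E}_{3/2}(u;t)=\frac{d}{dt}\bigl(\va(\|\nabla u(t)\|_{L^2}^2)\bigr)\|u(t)\|_{\dot{H}^{3/2}}^2$ checks out, the bound $\bigl|\frac{d}{dt}\|\nabla u(t)\|_{L^2}^2\bigr|\le 2\|u(t)\|_{\dot{H}^{3/2}}\|\pa_t u(t)\|_{\dot{H}^{1/2}}$ together with the conservation of $\mathcal{H}$ (which confines $\|\nabla u(t)\|_{L^2}^2$ to $[0,\Lambda]$, cf.\ Remark \ref{remark:Lambda M and L}, so the constants $M$ and $L$ apply) yields the Riccati inequality $\frac{d}{dt}\mathcal{E}_{3/2}\le 2L\nu_0^{-3/2}\mathcal{E}_{3/2}^2$, whose blow-up time reproduces the classical lower bound \eqref{eq:classical lower bound} up to a factor of $2$, and the continuation argument via the $\exp\bigl(C\int_0^t|\frac{d}{d\tau}c(\tau)^2|\,d\tau\bigr)$ estimate correctly gives the dichotomy (i)/(ii) and the dependence of $T_m$ on $\mathcal{H}(u;0)$ and $\mathcal{E}_{3/2}(u;0)$ only. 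It is worth noting that your linearize-and-fixed-point scheme is structurally the same device this paper deploys in Section \ref{sec:proof of main theorem} for its main theorem, except that there the fixed point is run on the coefficient class $\mathcal{K}$ via Schauder--Tychonoff (Lemmas \ref{l:ThetaContinuous} and \ref{l:KConvexCompact}) rather than by contraction; writing $c(t)^2$ for $\va$ instead of the paper's $c(t)$ is a harmless cosmetic difference since $\va\ge\nu_0>0$.

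Two points in your sketch would need care if written out in full. First, the contraction claim for $\Phi(\rho)(t)=\|\nabla v(t)\|_{L^2}^2$ is not automatic: the difference $w=v_1-v_2$ solves $\pa_t^2 w-\va(\rho_1(t))\Delta w=\{\va(\rho_1(t))-\va(\rho_2(t))\}\Delta v_2$, and the forcing lives naturally in $\dot{H}^{-1/2}$ (precisely because $v_2\in\dot{H}^{3/2}$), so the energy estimate must be made at the level $\dot{H}^{1/2}\times\dot{H}^{-1/2}$; estimating $\|\nabla w\|_{L^2}$ crudely by interpolation between $\dot{H}^{1/2}$ and $\dot{H}^{3/2}$ only gives H\"older-$\frac12$ dependence on $\|\rho_1-\rho_2\|_\infty$, which would ruin the contraction. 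The fix is the duality pairing $|\Phi(\rho_1)(t)-\Phi(\rho_2)(t)|=\bigl|2\real\bigl((-\Delta)^{\frac14}w(t),(-\Delta)^{\frac34}\tfrac{v_1(t)+v_2(t)}{2}\bigr)_{L^2}\bigr|\le\|w(t)\|_{\dot{H}^{1/2}}\bigl(\|v_1(t)\|_{\dot{H}^{3/2}}+\|v_2(t)\|_{\dot{H}^{3/2}}\bigr)$, which restores Lipschitz dependence with a factor of $T$; the same pairing is what makes your Gronwall uniqueness argument close. Second, the invariance of your ball: $\mathcal{H}$ is not conserved for the linear problem with a prescribed coefficient, so $\Phi(\rho)$ need not take values in $[0,\Lambda]$; one should either work on a slightly enlarged interval $[0,\Lambda+\ep]$ (defining $M$ and $L$ there) or truncate the nonlinearity, exactly as the paper does with $\va^*$ in Section \ref{sec:proof of main theorem}. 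Both repairs are routine, and with them your argument is a complete and faithful proof of the quoted result.
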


We remark here that the life span $T_m$ is to be understood as follows:
\[
T_m=\sup\, \left\{t: \; \text{$H^{\frac32}$-solution $u(\tau,\cdot)$ to \eqref{EQ:Kirchhoff}
with data $(u_0,u_1)$
exists for $0\leq \tau<t$}\right\}.
\]
It should be noted that, however big the regularity of the data is, $T_m$ depends only on the norm of the data in $\dot{H}^{3/2}(\Rn)\times \dot{H}^{1/2}(\Rn)$.
This means that when one would show the global existence of solutions to \eqref{EQ:Kirchhoff}, it suffices to obtain that the norm of solutions in $\dot{H}^{3/2}(\Rn)\times \dot{H}^{1/2}(\Rn)$ is bounded on $[0,T_m)$.

The local existence theorem for Gevrey spaces is now immediately obtained as a consequence of Theorem \ref{thm:LocalExistenceSobolev}, and the life span depends only on the constants $\mathcal{H}(u;0)$ and $\mathcal{E}_{3/2}(u;0)$. More precisely, we have the following:

\begin{prop} \label{prop:G-local}
Suppose that $\va(\rho)$ is a locally Lipschitz function on $[0,\infty)$ satisfying 
the non-degeneracy condition \eqref{EQ:lower bound}.
Let $s>1$ and $\eta>0$. For any nontrivial $(u_0,u_1)\in 
(-\Delta)^{-\frac34}\gamma^s_{\eta,L^2}(\Rn)\times (-\Delta)^{-\frac14}\gamma^s_{\eta,L^2}(\Rn)$,
there exists a life span $T_m=T_m(u_0,u_1)>0$ depending only on 
$\mathcal{H}(u;0)$ and $\mathcal{E}_{3/2}(u;0)$ 
such that the Cauchy problem 
\eqref{EQ:Kirchhoff} admits a unique solution $u(t,x)$ in the class 
$$u\in C\left([0,T_m);(-\Delta)^{-\frac34}\gamma^s_{\eta,L^2}(\Rn)\right)
\cap C^1\left([0,T_m);(-\Delta)^{-\frac14}\gamma^s_{\eta,L^2}(\Rn)\right),
$$
and one of the following statements is true{\rm :}
\begin{enumerate}
\item[(i)] $T_m=+\infty${\rm ;}

\item[(ii)] $T_m<+\infty$ and $\displaystyle{\limsup_{t\to T_m^{-}}}\,
\mathcal{E}_{3/2}(u;t)=+\infty$. 
\end{enumerate}
\end{prop}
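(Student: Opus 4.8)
The plan is to deduce the proposition directly from the Sobolev local existence result, Theorem \ref{thm:LocalExistenceSobolev}, by showing that the Gevrey regularity of the data is propagated by the flow \emph{without any loss of the Gevrey radius} $\eta$, and that the Gevrey life span coincides with the Sobolev one. First I would note that, because the weight $e^{\eta|\xi|^{1/s}}$ dominates every polynomial, one has the continuous inclusions $(-\Delta)^{-\frac34}\gamma^s_{\eta,L^2}(\Rn)\subset \dot H^{\sigma}(\Rn)$ and $(-\Delta)^{-\frac14}\gamma^s_{\eta,L^2}(\Rn)\subset \dot H^{\sigma-1}(\Rn)$ for every $\sigma\geq 3/2$; in particular the data lies in every such homogeneous Sobolev space. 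Fixing $\sigma\geq 3/2$ as large as convenient, Theorem \ref{thm:LocalExistenceSobolev} produces a unique maximal Sobolev solution $u$ on a maximal interval $[0,T_m)$, with $T_m$ depending only on $\mathcal H(u;0)$ and $\mathcal E_{3/2}(u;0)$ and satisfying the dichotomy (i)--(ii). Uniqueness in the Gevrey class is then immediate, since every Gevrey solution is in particular a Sobolev solution; it remains only to show that $u$ takes values in the Gevrey spaces throughout $[0,T_m)$.

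For the propagation step I would freeze the coefficient along the solution by setting $c(t)^2:=\va(\|\nabla u(t)\|^2_{L^2})$, so that $u$ solves the linear equation $\pa_t^2 u-c(t)^2\Delta u=0$. On the Fourier side, for each fixed $\xi$ the function $\hat u(t,\xi)$ satisfies the scalar oscillator equation $\pa_t^2\hat u+c(t)^2|\xi|^2\hat u=0$. Introducing the frequency-localized energy $E(t,\xi):=|\pa_t\hat u(t,\xi)|^2+c(t)^2|\xi|^2|\hat u(t,\xi)|^2$, a direct computation using the equation gives $\frac{d}{dt}E(t,\xi)=(c^2)'(t)\,|\xi|^2|\hat u(t,\xi)|^2\leq \frac{|(c^2)'(t)|}{c^2(t)}\,E(t,\xi)$. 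The decisive point is that the logarithmic factor $|(c^2)'(t)|/c^2(t)$ is \emph{independent of} $\xi$, so Grönwall's inequality yields
\[
E(t,\xi)\leq E(0,\xi)\exp\left(\int_0^t \frac{|(c^2)'(\tau)|}{c^2(\tau)}\,d\tau\right),
\]
with a multiplicative constant not depending on $\xi$. By the energy conservation of Lemma \ref{lem:Energy} together with \eqref{EQ:lower bound}, $\|\nabla u(t)\|^2_{L^2}$ stays in $[0,\Lambda]$, so $\nu_0\leq c(t)^2\leq M$; moreover the total variation of $c^2$ on $[0,t]$ is finite for each $t<T_m$ (controlled through $L$ and the locally bounded quantity $\mathcal E_{3/2}$), so the exponent above is finite on $[0,T_m)$.

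Finally, I would multiply this $\xi$-uniform bound by the Gevrey weight $e^{\eta|\xi|^{1/s}}$ together with the powers of $|\xi|$ corresponding to $(-\Delta)^{\frac34}$ and $(-\Delta)^{\frac14}$, and integrate over $\xi$; using $\nu_0\leq c^2\leq M$ to pass between $E(t,\xi)$ and the Gevrey norms, this gives
\[
\|(-\Delta)^{\frac34}u(t)\|^2_{\gamma^s_{\eta,L^2}}+\|(-\Delta)^{\frac14}\pa_t u(t)\|^2_{\gamma^s_{\eta,L^2}}\leq C(t)\left(\|(-\Delta)^{\frac34}u_0\|^2_{\gamma^s_{\eta,L^2}}+\|(-\Delta)^{\frac14}u_1\|^2_{\gamma^s_{\eta,L^2}}\right),
\]
with the \emph{same} radius $\eta$ and $C(t)<\infty$ for $t<T_m$. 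Hence $u(t)\in(-\Delta)^{-\frac34}\gamma^s_{\eta,L^2}(\Rn)$ and $\pa_t u(t)\in(-\Delta)^{-\frac14}\gamma^s_{\eta,L^2}(\Rn)$ on all of $[0,T_m)$, and the required continuity in $t$ follows from the continuity of $t\mapsto E(t,\xi)$ and dominated convergence via the locally uniform dominating bound. Since Gevrey solutions are Sobolev solutions, the Gevrey life span cannot exceed $T_m$; combined with the above it equals $T_m$, so the dependence on $\mathcal H(u;0)$, $\mathcal E_{3/2}(u;0)$ and the dichotomy (i)--(ii) are inherited directly from Theorem \ref{thm:LocalExistenceSobolev}. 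The main obstacle is precisely this propagation step: one must verify that the growth factor in the energy estimate is genuinely independent of $\xi$, so that no shrinkage of the Gevrey radius occurs, and that $c^2$ remains bounded away from zero and of bounded variation up to (but not including) $T_m$.
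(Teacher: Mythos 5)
Your proposal is correct and follows essentially the same route as the paper: regard the data as elements of the phase space $\dot{H}^{3/2}(\Rn)\times \dot{H}^{1/2}(\Rn)$, apply Theorem \ref{thm:LocalExistenceSobolev} with $\sigma=3/2$, freeze the coefficient $c_u(t)=\va(\|\nabla u(t)\|^2_{L^2})\in \Lip_{\loc}([0,T])$ on each $[0,T]\subset[0,T_m)$, and propagate the Gevrey regularity with the unchanged radius $\eta$ through the resulting linear equation, so that the Gevrey life span coincides with the Sobolev one. The only difference is presentational: the paper delegates the propagation step to ``the theory of linear partial differential equations'' and then identifies the linear solution $v$ with $u$ by uniqueness, whereas you make that step explicit via the $\xi$-uniform Gr\"onwall bound $E(t,\xi)\leq E(0,\xi)\exp\left(\int_0^t |(c^2)'(\tau)|/c^2(\tau)\, d\tau\right)$, which is precisely the estimate underlying the citation and incurs no shrinkage of $\eta$ because the frozen coefficient is Lipschitz on compact subintervals of $[0,T_m)$ (the loss of radius, as in Proposition \ref{prop:G-Colombini}, only arises when $c'$ blows up at the endpoint).
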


\begin{proof}
We may see the initial data in $(-\Delta)^{-\frac34}\gamma^s_{\eta,L^2}(\Rn)\times (-\Delta)^{-\frac14}\gamma^s_{\eta,L^2}(\Rn)$ as elements of the phase space $\dot{H}^{3/2}(\Rn)\times \dot{H}^{1/2}(\Rn)$. 
Let $T\in(0,T_m)$ be arbitrarily fixed. By Theorem \ref{thm:LocalExistenceSobolev}, with $\sigma=3/2$, we know that the Cauchy problem \eqref{EQ:Kirchhoff} admits a unique solution $u$ such that 
\[
u\in C([0,T];\dot{H}^{\frac32}(\Rn))\cap C^1([0,T];\dot{H}^{\frac12}(\Rn)) .
\]
Put
\[
c_u(t)=\va(\|\nabla u(t)\|^2_{L^2}) \in \Lip_{\loc}([0,T]). 
\]
It follows by the theory of linear partial differential equations that the Cauchy problem 
\[
\pa^2_t v-c_u(t)\Delta v=0, \quad t>0,  \quad x\in\Rn,
\]
with initial data $(u_0, u_1)$, admits a unique solution $v(t,x)$ such that 
\[
v\in C\left([0,T];(-\Delta)^{-\frac34}\gamma^s_{\eta,L^2}(\Rn)\right)
\cap C^1\left([0,T];(-\Delta)^{-\frac14}\gamma^s_{\eta,L^2}(\Rn)\right).
\]
Then we conclude that $v=u$, i.e., the Cauchy problem \eqref{EQ:Kirchhoff} admits a unique solution $u$ such that 
\[
u\in C\left([0,T];(-\Delta)^{-\frac34}\gamma^s_{\eta,L^2}(\Rn)\right)
\cap C^1\left([0,T];(-\Delta)^{-\frac14}\gamma^s_{\eta,L^2}(\Rn)\right). 
\]
From here the result follows.
\end{proof}

We end this section with a remark on the constants in \eqref{eq:Lambda} and \eqref{eq:MandL}.

\begin{remark} \label{remark:Lambda M and L}
Depending on the initial data $(u_{0}, u_{1}) \in \gamma^{s}_{L^{2}}(\Rn) \times \gamma^{s}_{L^{2}}(\Rn)$, the domain of $\varphi$ in \eqref{EQ:Kirchhoff} is bounded. Indeed, suppose that $u(t, x)$ is the solution to \eqref{EQ:Kirchhoff} with life span $T_m = T_m(u_0, u_1) > 0$ and let $\Lambda$ be as in \eqref{eq:Lambda}. Then, in particular,
	\[ \Lambda = \nu_{0}^{-1} \mathcal{H}(u;0) . \]
Now, it follows from \eqref{EQ:lower bound} and \eqref{eq.cons} 
\[
\|\nabla u(t,\cdot)\|^2_{L^2}\leq \nu^{-1}_0\mathcal{H}(u;t)
=\nu^{-1}_0\mathcal{H}(u;0) = \Lambda
\]
for any $t\in[0,T_m)$. This implies that $[0,\Lambda]$ is the actual domain of $\va(\rho)$ in this context. Then, if $M$ and $L$ are as in \eqref{eq:MandL}, it follows that
	\[ \nu_{0} \leq \varphi(\rho) \leq M \quad \text{for all } \rho \in [0, \Lambda] ,  \]
and
	\[ |\varphi^{\prime}(\rho)| \leq L \quad \text{for almost all } \rho \in [0, \Lambda] . \]
\end{remark}

\section{The proof of Theorem \ref{thm:Lower bound Tm}}
\label{sec:proof of main theorem}

We now focus on proving Theorem \ref{thm:Lower bound Tm}. To do this, we will consider linear Cauchy problems of the form
\begin{equation} \label{EQ:Linear}
\left\{
\begin{aligned}
& \pa^2_t v-c(t) \Delta v=0, & \quad 
t\in (0,T), \quad x\in \Rn,\\
& v(0,x)=u_0(x), \quad \pa_t v(0,x)=u_1(x), 
&\quad x \in \mathbb{R}^n.
\end{aligned}\right. 
\end{equation} 
In the case where the derivative of $c$ has a pole at $T$, we find the following result when $u_0$ and $u_1$ belong to $\gamma^{s}_{L^{2}}(\Rn)$ (see also \cite{Colombini}).

\begin{prop} 
\label{prop:G-Colombini}
Let $1/(q-1)\leq s<q/(q-1)$ and $q>1$. 
Assume that $c(t)$ is a function on $[0, T]$ that belongs to $\mathrm{Lip}_{\mathrm{loc}}([0,T))$ and satisfies 
\begin{gather}
\nu_0\leq c(t) \leq M, \quad t\in [0,T], \label{EQ:G-hyp1} \\
\left|c^\prime(t)\right|\leq \frac{K}{(T-t)^q}, \quad a.e.\, t\in [0,T) , \label{EQ:G-hyp2}
\end{gather}
for some $0<\nu_0<M$ and $K>0$.
Take any $(u_0, u_1) \in (-\Delta)^{-\sigma - 1/2} \gamma^s_{\eta, L^2}(\Rn) \times (-\Delta)^{-\sigma} \gamma^s_{\eta, L^2}(\Rn)$ for some $\sigma \geq 0$ and
\begin{equation} \label{eq.eta}
\eta>\left(\frac{K}{q-1}+2M\right)\nu^{-1}_0 .
\end{equation}
Then, the Cauchy problem \eqref{EQ:Linear} with initial data $(u_0, u_1)$ admits a unique solution $v \in C^1([0,T];\gamma^s_{L^2}(\Rn))$, and
\begin{equation}\label{EQ:G-interval}
\begin{split}
& \nu_0 \|(-\Delta)^{\sigma + 1/2} v(t)\|^2_{\gamma^s_{\eta^\prime,L^2}}
+\| \pa_t (-\Delta)^{\sigma} v(t)\|^2_{\gamma^s_{\eta^\prime,L^2}}\\
& \qquad
\leq\max(M,1)e^{2\nu^{-1}_0M \max\{1,T^{1-(qs-s)}\}}
\|((-\Delta)^{\sigma + 1/2} u_0, (-\Delta)^{\sigma} u_1)\|^2_{\gamma^s_{\eta,L^2} \times \gamma^s_{\eta,L^2}}
\end{split}
\end{equation}
for $t\in [0,T]$, where 
\[
\eta^\prime
=\eta-\left(\frac{K}{q-1}+2M\right)\nu^{-1}_0>0.
\]
\end{prop}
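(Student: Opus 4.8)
The plan is to pass to the Fourier side and treat the problem frequency by frequency. Writing $\hat v(t,\xi)=(\mathcal F v)(t,\xi)$, the Cauchy problem \eqref{EQ:Linear} decouples into the family of scalar ODEs $\pa_t^2\hat v(t,\xi)+c(t)|\xi|^2\hat v(t,\xi)=0$ with $\hat v(0,\xi)=\hat u_0(\xi)$, $\pa_t\hat v(0,\xi)=\hat u_1(\xi)$. Since $(-\Delta)^{\sigma}$ and $(-\Delta)^{\sigma+1/2}$ act as the Fourier multipliers $|\xi|^{2\sigma}$ and $|\xi|^{2\sigma+1}$ and commute with the spatially constant-coefficient equation, the factor $|\xi|^{4\sigma}$ may simply be carried as a harmless weight inside every $L^2(\Rn;d\xi)$ integral. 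Thus, introducing the scalar energy $E(t,\xi)=|\pa_t\hat v(t,\xi)|^2+c(t)|\xi|^2|\hat v(t,\xi)|^2$, it suffices to prove the weighted a priori bound $\int_{\Rn}e^{\eta'|\xi|^{1/s}}|\xi|^{4\sigma}E(t,\xi)\,d\xi\le e^{C}\int_{\Rn}e^{\eta|\xi|^{1/s}}|\xi|^{4\sigma}E(0,\xi)\,d\xi$, with $C=2\nu_0^{-1}M\max\{1,T^{1-(qs-s)}\}$. Indeed, the left-hand side of \eqref{EQ:G-interval} is then $\le\int e^{\eta'|\xi|^{1/s}}|\xi|^{4\sigma}E(t,\xi)\,d\xi$ (using $c\ge\nu_0$), while the initial integral is $\le\max(M,1)\,\|((-\Delta)^{\sigma+1/2}u_0,(-\Delta)^{\sigma}u_1)\|^2_{\gamma^s_{\eta,L^2}\times\gamma^s_{\eta,L^2}}$ (using $c(0)\le M$), which yields \eqref{EQ:G-interval}. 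Existence, uniqueness and the regularity $v\in C^1([0,T];\gamma^s_{L^2}(\Rn))$ will then follow from solving the ODEs (Carathéodory theory, since $c\in\Lip_{\loc}([0,T))$), this uniform bound, and a routine limiting argument.

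The core is the a priori estimate, and here the essential difficulty is that $|c'(t)|\le K(T-t)^{-q}$ with $q>1$ forces $\int_0^T|c'(t)|\,dt=+\infty$; consequently the crude inequality $|E'(t,\xi)|=|c'(t)|\,|\xi|^2|\hat v|^2\le\nu_0^{-1}|c'(t)|\,E(t,\xi)$ yields, via Gronwall, a bound that blows up as $t\to T^-$ for every fixed $\xi\neq0$. The resolution is the Colombini--De Giorgi--Spagnolo device of an approximate energy that sees the oscillation of $\hat v$ at the natural frequency $|\xi|\sqrt{c}$. Concretely, I would mollify the coefficient in time at a frequency-dependent scale, $c_\ep(t)=(c*\chi_\ep)(t)$ with $\ep=\ep(\xi)$, and replace $E$ by $E_\ep(t,\xi)=|\pa_t\hat v|^2+c_\ep(t)|\xi|^2|\hat v|^2$. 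Since averaging preserves the bounds $\nu_0\le c_\ep\le M$, the energies $E$ and $E_\ep$ are comparable, and using the equation one computes
\[
E_\ep'(t,\xi)=2\bigl(c_\ep(t)-c(t)\bigr)|\xi|^2\real\!\left(\hat v\,\overline{\pa_t\hat v}\right)+c_\ep'(t)|\xi|^2|\hat v|^2\le\Bigl(2\nu_0^{-1/2}\,|c_\ep-c|\,|\xi|+\nu_0^{-1}\,|c_\ep'|\Bigr)E_\ep(t,\xi).
\]

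Next I would estimate the two error factors by playing the approximation error against the regularised derivative, using the mollifier bounds $|c-c_\ep|\lesssim\min\{\ep\,K(T-t)^{-q},\,2M\}$ and $|c_\ep'|\lesssim\min\{K(T-t)^{-q},\,M\ep^{-1}\}$, where the bounded alternatives are the useful ones in the zone $T-t\lesssim\ep$ adjacent to the pole. Schematically, with the scale matched to the frequency so that $\ep(\xi)^{1-q}\sim|\xi|^{1/s}$ (that is, $\ep(\xi)\sim|\xi|^{-1/(s(q-1))}$), integrating the regularised derivative away from the pole contributes $\approx\nu_0^{-1}\tfrac{K}{q-1}|\xi|^{1/s}$, while the near-pole approximation/boundary piece contributes $\approx 2\nu_0^{-1}M|\xi|^{1/s}$; requiring every resulting exponent to stay at or below the power $|\xi|^{1/s}$ is exactly what pins down the Gevrey range $1/(q-1)\le s<q/(q-1)$. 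This $|\xi|^{1/s}$-growth is then absorbed by the spectral gap between the two weights, namely $\eta-\eta'=(\tfrac{K}{q-1}+2M)\nu_0^{-1}$ (whence also the threshold \eqref{eq.eta} on $\eta$), via $e^{(\eta-\eta')|\xi|^{1/s}}e^{\eta'|\xi|^{1/s}}=e^{\eta|\xi|^{1/s}}$; the residual bounded factor, coming from the low-frequency band and the short-time regime in which the mollification cannot resolve below the scale of the interval, produces the constant $e^{2\nu_0^{-1}M\max\{1,T^{1-(qs-s)}\}}$ (note $qs-s=s(q-1)\ge1$, so this factor is nontrivial precisely for $T\le1$).

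I expect the main obstacle to be exactly this balancing step: choosing $\ep(\xi)$ (and, if one prefers to run the argument with a moving Gevrey weight $\rho(t)$ decreasing from $\eta$ to $\eta'$, the simultaneous choice of $\rho$) so that the approximation-error term and the regularised-derivative term are each absorbed, with constants sharp enough to recover the precise exponents in \eqref{EQ:G-interval} and the threshold \eqref{eq.eta}. The genuinely delicate point is that the cancellation being exploited is oscillatory in nature---it is visible only through the regularised energy $E_\ep$, never through the modulus $|c'|$---so the bookkeeping near $t=T$, where $\ep(\xi)$ must be compared with $T-t$, has to be carried out carefully, and this is also where the short-time factor $T^{1-(qs-s)}$ enters. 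Once the a priori bound is established, assembling \eqref{EQ:G-interval} together with the existence, uniqueness and $C^1$-in-time statements is routine.
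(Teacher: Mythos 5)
Your overall architecture --- Fourier decoupling, a frequency-dependent approximate energy in the Colombini--De Giorgi--Spagnolo style, and absorption of the resulting $e^{C|\xi|^{1/s}}$ Gronwall loss into the radius gap $\eta-\eta'$ --- is exactly that of the paper, and your differential inequality for $E_\ep$ is the right computation. But there is a genuine quantitative gap at precisely the step you flag as the ``balancing''. With a true convolution $c_\ep=c*\chi_\ep$ you must pay, away from the pole, not only the regularised-derivative term $\nu_0^{-1}\int_0^{T-\ep}|c_\ep'(\tau)|\,d\tau\approx\frac{K\nu_0^{-1}}{q-1}\ep^{1-q}$ but also the approximation-error term $\nu_0^{-1}|\xi|\int_0^{T-\ep}|c(\tau)-c_\ep(\tau)|\,d\tau\lesssim\frac{K\nu_0^{-1}}{q-1}\,\ep^{2-q}|\xi|$, which your schematic tally omits (you account for $|c-c_\ep|$ only in the zone $T-t\lesssim\ep$). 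At your matched scale $\ep(\xi)\sim|\xi|^{-1/(s(q-1))}$ this extra term is of order $|\xi|^{\,1-(2-q)/(s(q-1))}$, and $1-(2-q)/(s(q-1))\le 1/s$ holds if and only if $s\le 1/(q-1)$; for $q\ge 2$ the exponent is $\ge 1$. Hence on the whole open range $1/(q-1)<s<q/(q-1)$ the Gronwall factor grows like $e^{c|\xi|^{\beta}}$ with $\beta>1/s$, which no finite gap $\eta-\eta'$ can absorb: for instance $q=3/2$, $s=5/2$ gives $\beta=3/5>2/5=1/s$. So the scheme as you describe it fails except at the endpoint $s=1/(q-1)$.

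The repair is what the paper actually does: it does not smooth but \emph{freezes} the coefficient. Setting $c_*(t,\xi)=c(t)$ for $t\le T-|\xi|^{-1/(qs-s)}$ and $c_*(t,\xi)=c\bigl(T-|\xi|^{-1/(qs-s)}\bigr)$ afterwards (and $c_*\equiv c(T)$ when $T|\xi|^{1/(qs-s)}\le 1$, which is where the factor $T^{1-(qs-s)}$ originates), one has $|c_*-c|\equiv 0$ on the far zone and $\partial_t c_*\equiv 0$ on the near zone, so each time zone carries exactly one of the two contributions you listed, namely $\frac{K\nu_0^{-1}}{q-1}|\xi|^{1/s}$ and $2\nu_0^{-1}M|\xi|^{1-1/(qs-s)}\le 2\nu_0^{-1}M\bigl(1+|\xi|^{1/s}\bigr)$; in other words, your bookkeeping is really the bookkeeping of the freezing construction, not of convolution. (The paper also builds the correction multiplicatively into the energy through the weight $k(t,\xi)=\exp\bigl(-\int_0^t\alpha(\tau,\xi)\,d\tau+\eta|\xi|^{1/s}\bigr)$, so that $\partial_t E\le 0$ pointwise in $\xi$; this is Gronwall in disguise and matches your plan.) Alternatively, convolution can be salvaged by systematically using your capped bounds $\min\{\ep K(T-t)^{-q},2M\}$ and $\min\{K(T-t)^{-q},M\ep^{-1}\}$ and rebalancing to $\ep\sim|\xi|^{-1}$, which makes all exponential contributions $O\bigl(|\xi|^{(q-1)/q}\bigr)$ with $(q-1)/q<1/s$; but the constants then no longer reproduce the precise loss $\eta-\eta'=\bigl(\frac{K}{q-1}+2M\bigr)\nu_0^{-1}$ and the threshold \eqref{eq.eta} claimed in the proposition.
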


\begin{proof}
Suppose $((-\Delta)^{\sigma + 1/2} u_{0}, (-\Delta)^{\sigma} u_1) \in \gamma^s_{\eta, L^2}(\Rn)\times \gamma^s_{\eta, L^2}(\Rn)$ for some $\sigma \geq 0$ and $\eta$ satisfying \eqref{eq.eta}.
Let $w=w(t,\xi)$ be a solution of the Cauchy problem 
\[
\left\{
\begin{aligned}
& \partial^{2}_{t} w + c(t) |\xi|^2 w = 0, & \quad t\in (0,T), \quad \xi \in \Rn ,\\
& w(0,\xi)=(\mathcal{F}u_0)(\xi), \quad \pa_t w(0,\xi)=(\mathcal{F}u_1)(\xi) , & \quad \xi \in \Rn . 
\end{aligned}
\right.
\]
We define 
\[
c_*(t,\xi)=
\begin{cases}
c(T) & \quad \text{if $T|\xi|^{\frac{1}{qs-s}}\leq 1$,}\\
c(t) & \quad \text{if $T|\xi|^{\frac{1}{qs-s}}>1$ and $0\leq t\le T-|\xi|^{-\frac{1}{qs-s}}$,}\\
c\left(T-|\xi|^{-\frac{1}{qs-s}}\right) & \quad \text{if $T|\xi|^{\frac{1}{qs-s}}>1$ and $T-|\xi|^{-\frac{1}{qs-s}}<t\leq T$,}
\end{cases}
\]
and 
\[
\alpha(t,\xi)=\nu^{-1}_0 
|c_*(t,\xi)-c(t)||\xi|
+\frac{|\partial_{t} c_*(t,\xi)|}{c_*(t,\xi)}.
\]
We adopt an energy for $w$ as 
\[
E(t,\xi)=\left[
|\partial_{t} w(t,\xi)|^2+c_*(t,\xi) |\xi|^2|w(t,\xi)|^2
\right] |\xi|^{4 \sigma} k(t,\xi),
\]
where 
\[
k(t,\xi)=\exp\left(-\int^t_0 \alpha(\tau,\xi)\, d\tau+\eta|\xi|^{\frac{1}{s}}\right) .
\]
We put
\[
\mathcal{E}(t)=\int_\Rn E(t,\xi)\, d\xi ,
\]
and note that
\begin{equation} \label{EQ:E0}
\mathcal{E}(0) \leq \max(M, 1) \|((-\Delta)^{\sigma + 1/2} u_0, (-\Delta)^{\sigma} u_1)\|^2_{\gamma^s_{\eta,L^2} \times \gamma^s_{\eta,L^2}} .
\end{equation}
We first estimate the integral of $\alpha(t,\xi)$.
When 
$$
T|\xi|^{\frac{1}{qs-s}}\leq1,
$$ 
we find by \eqref{EQ:G-hyp1},
\begin{equation}\label{EQ:IMP1}
\int^t_0\alpha(\tau,\xi)\, d\tau
\leq \int^{T}_0 \nu^{-1}_0 |c(T)-c(\tau)||\xi|\, d\tau
\leq 2\nu^{-1}_0M T|\xi| 
\leq 2\nu^{-1}_0M T^{1-(qs-s)},
\end{equation}
while if $$T|\xi|^{\frac{1}{qs-s}}>1,$$
it follows from \eqref{EQ:G-hyp1} and \eqref{EQ:G-hyp2} that 
\begin{equation}\label{EQ:IMP2}
\begin{split}
 \int^t_0\alpha(\tau,\xi)\, d\tau
\leq&\, \int^{T-|\xi|^{-\frac{1}{qs-s}}}_0 \frac{|c^\prime(\tau)|}{c(\tau)}\, d\tau
+\int^T_{T-|\xi|^{-\frac{1}{qs-s}}} \nu^{-1}_0 |c_*(\tau,\xi)-c(\tau)||\xi|\, d\tau \\
\leq& \,\int^{T-|\xi|^{-\frac{1}{qs-s}}}_0 \frac{K \nu^{-1}_0}{(T-\tau)^q}\, d\tau
+2\nu^{-1}_0M |\xi|^{1-\frac{1}{qs-s}} \\
\leq& \, \frac{K\nu^{-1}_0|\xi|^{\frac{1}{s}}}{q-1}+2\nu^{-1}_0M |\xi|^{1-\frac{1}{qs-s}}.
\end{split}
\end{equation}
Since $1-1/(qs-s)<1/s$ by our assumptions on $s$ and $q$, it follows that
\[
|\xi|^{1-\frac{1}{qs-s}} \leq (1+|\xi|)^{\frac{1}{s}} \le 1+|\xi|^{\frac{1}{s}}.
\]
Consequently, we infer from \eqref{EQ:IMP1} and \eqref{EQ:IMP2} that
\[
k(t,\xi)\geq e^{-2\nu^{-1}_0M \max\{1,T^{1-(qs-s)}\}}
e^{\left(\eta-\frac{K\nu^{-1}_0}{q-1}-2\nu^{-1}_0M \right)|\xi|^{\frac{1}{s}}},
\]
and hence,
\begin{equation}\label{EQ:G-Energy1}
\begin{split}
 \mathcal{E}(t)\geq & \,   
e^{-2\nu^{-1}_0M \max\{1,T^{1-(qs-s)}\}} \\
& \qquad \cdot \int_\Rn e^{\left(\eta-\frac{K\nu^{-1}_0}{q-1}-2\nu^{-1}_0M \right)|\xi|^{\frac{1}{s}}}
\left[\nu_0 |\xi|^2|w(t,\xi)|^2+|\partial_{t} w(t,\xi)|^2\right] |\xi|^{4 \sigma} d\xi.
\end{split}
\end{equation}
We may compute the time derivative of $E(t,\xi)$,
\begin{align*}
\partial_{t} E(t,\xi)=
&\, \left[2\mathrm{Re} (\partial_{t}^{2} w \overline{\partial_{t} w})
+\partial_{t} c_*(t,\xi)|\xi|^2|w|^2
+2c_*(t,\xi) |\xi|^2\mathrm{Re} (\partial_{t} w \overline{w})
\right] |\xi|^{4 \sigma} k(t,\xi)\\
{} & -\{c_*(t,\xi) |\xi|^2 |w|^2+|\partial_{t} w|^2\} \alpha(t,\xi) |\xi|^{4 \sigma} k(t,\xi)\\
=&\, \left[\{c_*(t,\xi)-c(t)\}|\xi|^2\mathrm{Re} (\partial_{t} w \overline{w})
+ \partial_{t}c_*(t,\xi)|\xi|^2|w|^2\right] |\xi|^{4 \sigma} k(t,\xi) \\
{} & - \alpha(t,\xi)E(t,\xi),
\end{align*}
and note that for the left part we have
\begin{multline*}
\left[\frac{|c_*(t,\xi)-c(t)||\xi|}{c_*(t,\xi)}
|\partial_{t} w| \cdot c_*(t,\xi)|\xi| |w|
+\frac{|\partial_{t} c_*(t,\xi)|}{c_*(t,\xi)} c_*(t,\xi) |\xi|^2|w|^2\right] |\xi|^{4\sigma} k(t,\xi) \\
\leq \left[\nu^{-1}_0|c_*(t,\xi)-c(t)| |\xi| 
+\frac{|\partial_{t} c_*(t,\xi)|}{c_*(t,\xi)} \right] E(t,\xi) = \alpha(t,\xi) E(t,\xi) ,
\end{multline*}
which implies that $\pa_t E(t,\xi)\leq 0$ for a.e. $t\in[0,T]$. Consequently,
\[
\mathcal{E}(t)\leq \mathcal{E}(0),
\]
so that \eqref{EQ:G-interval} follows directly from \eqref{EQ:E0} and \eqref{EQ:G-Energy1}.
\end{proof}

For the remainder of this section, we fix
	\[ (u_0,u_1) \in (-\Delta)^{-\frac{3}{4}} \gamma_{\eta, L^2}^{s}(\Rn) \times (-\Delta)^{-\frac{1}{4}} \gamma_{\eta, L^2}^{s}(\Rn) , \] 
with $\eta > 2 M \nu_0^{-1}$. Moreover, we will assume that $T_m = T_m(u_0, u_1) <+ \infty$, as otherwise Theorem \ref{thm:Lower bound Tm} is trivial. Our proof is based on a contradiction argument, that is, we will from now on suppose that \eqref{eq:Lower bound Tm} is false and from there show that the life span is then strictly larger than $T_m$. For this, we will consider the following class of functions.

\begin{defn}
Let 
	\begin{equation}
		\label{eq:K}
		K := \frac{2L \max(M, 1)}{\min(\nu_0, 1)} e^{2 \nu_0^{-1} M} T_{m}^{\frac{s + 1}{s}} \left\| \left( (-\Delta)^{\frac{3}{4}} u_{0}, (-\Delta)^{\frac{1}{4}} u_{1} \right) \right\|_{\gamma_{\eta, L^{2}}^{s} \times \gamma_{\eta, L^{2}}^{s}}^{2} . 
	\end{equation}
We define the class $\mathcal{K}$ as all those functions $c$ on $[0, T_m]$ such that $c \in \mathrm{Lip}_{\mathrm{loc}}([0,T_m))$ for which
\[
\left\{
\begin{aligned}
& \nu_0 \leq c(t)\leq M, &\quad t \in [0, T_{m}],\\
& \left| c^\prime(t) \right|\leq \frac{K}{(T_m - t)^{\frac{s + 1}{s}}}, 
&\quad \mathrm{a.e.}\, t\in [0,T_m).
\end{aligned}\right.
\] 
We endow $\mathcal{K}$ with the topology induced by the Fr\'{e}chet space $L^{\infty}_{\loc}([0, T_m))$. 
\end{defn} 

Note that if \eqref{eq:Lower bound Tm} doesn't hold, then this implies exactly that \eqref{eq.eta} holds with $K$ as in \eqref{eq:K} and $q = (s + 1) / s$. Consequently, by Proposition \ref{prop:G-Colombini}, for any $c \in \mathcal{K}$ the Cauchy problem \eqref{EQ:Linear} with initial data $(u_0, u_1)$ has a unique solution $v(t, x) \in C^{1}\left([0, T_{m}]; \gamma^{s}_{L^{2}}(\Rn)\right)$. We now consider the function
	\[ \varphi^{*}(\rho) = \begin{cases} \varphi(\rho) , & 0 \leq \rho \leq \Lambda, \\ \varphi(\Lambda) , & \rho > \Lambda . \end{cases} \]
Then $\varphi^{*} \in \mathrm{Lip}_{\mathrm{loc}}([0,\infty))$, and note that by Remark \ref{remark:Lambda M and L}, in case of the initial data $(u_0, u_1)$, we may exchange $\varphi$ with $\varphi^{*}$ in \eqref{EQ:Kirchhoff} and obtain the same solution $u(\cdot, x)$ on $[0, T_m)$. Given a $c \in \mathcal{K}$, we define the function
	\[ c_{v}(t) := \varphi^{*}\left( \int_{\Rn} |\nabla v(t, x)|^{2} dx \right) . \]
Theorem \ref{thm:Lower bound Tm} will then follow from the following two crucial results.

	\begin{lem}
		\label{l:ThetaContinuous}
		The mapping
			\begin{equation} 
				\label{eq:Theta}
				\Theta : \mathcal{K} \rightarrow \mathcal{K} : \quad c(t) \mapsto c_{v}(t) , 
			\end{equation}
		is well-defined and continuous.
	\end{lem}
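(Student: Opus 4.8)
The plan is to verify separately that $\Theta$ is well-defined (i.e.\ $c_v \in \mathcal{K}$ for every $c \in \mathcal{K}$) and that it is continuous, in both cases reducing everything to estimates on $g(t) := \|\nabla v(t)\|_{L^2}^2 = \int_{\Rn} |\xi|^2 |(\mathcal{F}v)(t,\xi)|^2 \, d\xi$, so that $c_v = \varphi^{*} \circ g$. For well-definedness, the bound $\nu_0 \le c_v(t) \le M$ is immediate: by Remark \ref{remark:Lambda M and L} we have $\nu_0 \le \varphi(\rho) \le M$ on $[0,\Lambda]$, whence $\nu_0 \le \varphi^{*} \le M$ on all of $[0,\infty)$. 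The content is therefore the derivative bound. First I would check that $g$ is locally Lipschitz on $[0,T_m)$: Proposition \ref{prop:G-Colombini}, applied with $\sigma = 1/4$ and $q = (s+1)/s$ (so that $qs - s = 1$), yields $v \in C^1([0,T_m];\gamma^s_{L^2}(\Rn))$ together with uniform bounds on $\|v(t)\|_{\dot{H}^{3/2}}$ and $\|\pa_t v(t)\|_{\dot{H}^{1/2}}$. Differentiation under the integral sign, legitimate by these uniform bounds, gives $g'(t) = 2\real \int_{\Rn} |\xi|^2 (\pa_t \mathcal{F}v)\,\overline{\mathcal{F}v}\, d\xi$, and Cauchy--Schwarz yields $|g'(t)| \le 2\|\pa_t v(t)\|_{\dot{H}^{1/2}} \|v(t)\|_{\dot{H}^{3/2}}$. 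Since $\varphi^{*}$ is locally Lipschitz with $|(\varphi^{*})'| \le L$ (again Remark \ref{remark:Lambda M and L}), the chain rule gives $c_v'(t) = (\varphi^{*})'(g(t))\, g'(t)$ for a.e.\ $t$.

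For the quantitative derivative bound I would combine the elementary inequality
\[
2\|\pa_t v\|_{\dot{H}^{1/2}} \|v\|_{\dot{H}^{3/2}} \le \frac{1}{\min(\nu_0,1)} \left( \nu_0 \|v\|_{\dot{H}^{3/2}}^2 + \|\pa_t v\|_{\dot{H}^{1/2}}^2 \right)
\]
with the energy estimate \eqref{EQ:G-interval}. Because $e^{\eta'|\xi|^{1/s}} \ge 1$, its left-hand side dominates $\nu_0 \|v(t)\|_{\dot{H}^{3/2}}^2 + \|\pa_t v(t)\|_{\dot{H}^{1/2}}^2$, while on the right $\max\{1, T_m^{1-(qs-s)}\} = 1$. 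This produces
\[
|c_v'(t)| \le \frac{L \max(M,1)}{\min(\nu_0,1)} e^{2\nu_0^{-1}M} \left\| \left( (-\Delta)^{\frac34}u_0, (-\Delta)^{\frac14}u_1 \right) \right\|_{\gamma^s_{\eta,L^2} \times \gamma^s_{\eta,L^2}}^2 = \tfrac12 K\, T_m^{-\frac{s+1}{s}},
\]
which is $\le K (T_m - t)^{-(s+1)/s}$ for all $t \in [0,T_m)$; hence $c_v \in \mathcal{K}$ and $\Theta$ is well-defined.

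For continuity, take $c_n \to c$ in $L^\infty_{\loc}([0,T_m))$ with $c_n, c \in \mathcal{K}$, and let $v_n, v$ be the corresponding solutions. The difference $w_n := v_n - v$ solves $\pa_t^2 w_n - c_n(t)\Delta w_n = (c_n(t) - c(t))\Delta v$ with zero Cauchy data. Fix $t_0 \in (0,T_m)$; on $[0,t_0]$ one has $|c_n'| \le K(T_m - t_0)^{-(s+1)/s} =: K_0$. For $\mathcal{E}_n(t) := \int_{\Rn} \big( |\pa_t \mathcal{F}w_n|^2 + c_n|\xi|^2 |\mathcal{F}w_n|^2 \big) d\xi$, a standard energy computation in the Fourier variable gives, a.e.\ on $[0,t_0]$,
\[
\frac{d}{dt}\mathcal{E}_n(t) = \int_{\Rn} c_n'(t)|\xi|^2|\mathcal{F}w_n|^2 \, d\xi - 2\real \int_{\Rn} (c_n - c)|\xi|^2 (\mathcal{F}v)\,\overline{\pa_t \mathcal{F}w_n}\, d\xi .
\]
Using $\nu_0\|\nabla w_n\|_{L^2}^2 \le \mathcal{E}_n$, $\|\pa_t w_n\|_{L^2}^2 \le \mathcal{E}_n$, and $\|(c_n - c)\Delta v(t)\|_{L^2} \le \delta_n \sup_{[0,t_0]}\|v\|_{\dot{H}^2}$ with $\delta_n := \|c_n - c\|_{L^\infty([0,t_0])} \to 0$ (the supremum being finite since the Gevrey bound on $(-\Delta)^{3/4}v$ controls $\|v(t)\|_{\dot{H}^2}$ uniformly), one obtains $\mathcal{E}_n' \le K_0\nu_0^{-1}\mathcal{E}_n + C\delta_n \mathcal{E}_n^{1/2}$ with $\mathcal{E}_n(0) = 0$. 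Gr\"onwall's inequality, after substituting $\sqrt{\mathcal{E}_n}$, gives $\sup_{[0,t_0]}\sqrt{\mathcal{E}_n} \le C'\delta_n \to 0$, hence $\sup_{[0,t_0]}\|\nabla w_n(t)\|_{L^2} \to 0$. Since $\varphi^{*}$ is $L$-Lipschitz and $\|\nabla v_n\|_{L^2}, \|\nabla v\|_{L^2}$ are uniformly bounded, $|c_{v_n}(t) - c_v(t)| \le L(\|\nabla v_n(t)\|_{L^2} + \|\nabla v(t)\|_{L^2})\|\nabla w_n(t)\|_{L^2} \to 0$ uniformly on $[0,t_0]$, which is convergence in $L^\infty_{\loc}([0,T_m))$.

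The main obstacle is the singularity of the bound on $c_n'$ at $t = T_m$: the energy method for the difference cannot be closed uniformly up to $T_m$, which is precisely why continuity is obtained only on compact subintervals $[0,t_0]$ — exactly matching the $L^\infty_{\loc}$ topology placed on $\mathcal{K}$. A secondary technical point is the justification of the differentiation under the integral and of the a.e.\ chain rule for $\varphi^{*}\circ g$, both of which rest on the local Lipschitz regularity furnished by Proposition \ref{prop:G-Colombini}, together with the uniform $\dot{H}^2$ bound on $v$ coming from the same Gevrey (hence super-Sobolev) control.
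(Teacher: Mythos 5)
Your proposal is correct and follows essentially the same route as the paper: well-definedness via the a.e.\ chain rule for $\varphi^{*}\circ\|\nabla v(t)\|_{L^2}^2$ combined with the energy estimate \eqref{EQ:G-interval} of Proposition \ref{prop:G-Colombini} (with $q=(s+1)/s$, so $qs-s=1$, and $\sigma=1/4$), yielding $|c_v'(t)|\leq K T_m^{-\frac{s+1}{s}}\leq K(T_m-t)^{-\frac{s+1}{s}}$, and continuity via an energy--Gr\"onwall argument for the difference $w_n=v_n-v$ on compact subintervals, matching the $L^\infty_{\loc}$ topology. The only deviations are cosmetic: you put the coefficient $c_n$ and source $(c_n-c)\Delta v$ in the difference equation and pair the source as $\dot{H}^2\times L^2$ (using the Gevrey bound to control $\sup_t\|v(t)\|_{\dot{H}^2}$, then Gr\"onwall on $\sqrt{\mathcal{E}_n}$), whereas the paper uses coefficient $c$, source $(c_k-c)\Delta v_k$, and the $\dot{H}^{3/2}\times\dot{H}^{1/2}$ pairing with a linear Gr\"onwall involving $\int_0^t |c'|/c\,d\tau$ --- both close identically.
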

	
	\begin{lem}
		\label{l:KConvexCompact}
		$\mathcal{K}$ is a convex and compact Fr\'{e}chet space.
	\end{lem}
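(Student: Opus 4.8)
The plan is to regard $\mathcal{K}$ as a subset of the Fr\'echet space $X := L^{\infty}_{\loc}([0,T_m))$ and to establish the two asserted properties separately. Convexity is immediate: given $c_{0}, c_{1} \in \mathcal{K}$ and $\lambda \in [0,1]$, the combination $c_{\lambda} := (1-\lambda) c_{0} + \lambda c_{1}$ is again locally Lipschitz on $[0,T_m)$, and both the two-sided bound $\nu_0 \leq c_{\lambda} \leq M$ and the derivative estimate $|c_{\lambda}'(t)| \leq K (T_m - t)^{-(s+1)/s}$ follow by forming convex combinations of the corresponding inequalities for $c_{0}$ and $c_{1}$ (the derivative bound uses the triangle inequality applied to $c_{\lambda}' = (1-\lambda) c_{0}' + \lambda c_{1}'$). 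Hence $c_{\lambda} \in \mathcal{K}$.

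For compactness, since $X$ is metrizable it suffices to prove that $\mathcal{K}$ is sequentially compact. So I would take a sequence $(c_{j})_{j} \subset \mathcal{K}$ and fix an exhaustion $T_{1} < T_{2} < \cdots \nearrow T_m$. On each interval $[0, T_{k}]$ the bound $\nu_0 \leq c_{j} \leq M$ gives uniform boundedness, while, because the density $(T_m - t)^{-(s+1)/s}$ is increasing in $t$, the estimate $|c_{j}'(t)| \leq K (T_m - T_{k})^{-(s+1)/s} =: C_{k}$ for all $t \in [0, T_{k}]$ shows that the $c_{j}$ are equi-Lipschitz, hence equicontinuous, on $[0, T_{k}]$. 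By the Arzel\`a--Ascoli theorem together with a standard diagonal extraction over $k$, I obtain a subsequence (not relabelled) and a function $c_{\infty} : [0, T_m) \to [\nu_0, M]$ with $c_{j} \to c_{\infty}$ uniformly on every $[0, T_{k}]$, that is, $c_{j} \to c_{\infty}$ in $X$. I extend $c_{\infty}$ to $t = T_m$ by an arbitrary value in $[\nu_0, M]$, which is irrelevant to the topology of $X$.

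It then remains to verify that $c_{\infty} \in \mathcal{K}$. The bound $\nu_0 \leq c_{\infty} \leq M$ passes to the uniform limit, and as a locally uniform limit of $C_{k}$-Lipschitz functions on each $[0,T_{k}]$ the function $c_{\infty}$ is itself $C_{k}$-Lipschitz there, so $c_{\infty} \in \Lip_{\loc}([0,T_m))$. The one genuinely delicate point --- and the step I expect to be the main obstacle --- is transferring the pointwise derivative bound to $c_{\infty}$, since the derivatives $c_{j}'$ converge only in a weak sense and the limiting derivative is defined merely a.e. I would circumvent this by recasting the bound in integrated form: each $c_{j}$ is absolutely continuous on $[0,T_{k}]$, so
\[
|c_{j}(b) - c_{j}(a)| \leq \int_{a}^{b} \frac{K}{(T_m - t)^{(s+1)/s}}\, dt , \qquad 0 \leq a \leq b \leq T_{k} .
\]
Letting $j \to \infty$ and using uniform convergence on the left gives the same inequality for $c_{\infty}$.

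Since $c_{\infty}$ is locally absolutely continuous, I would finish by dividing this inequality by $b - a$ and letting $b \downarrow a$: at every Lebesgue point $a$ of the locally integrable density $t \mapsto K (T_m - t)^{-(s+1)/s}$, the Lebesgue differentiation theorem yields $|c_{\infty}'(a)| \leq K (T_m - a)^{-(s+1)/s}$, hence for a.e.\ $a \in [0, T_m)$. This confirms $c_{\infty} \in \mathcal{K}$ and, with the convergence already established in $X$, proves that $\mathcal{K}$ is sequentially compact, and therefore compact.
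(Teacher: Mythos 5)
Your proposal is correct and takes essentially the same route as the paper: both convert the a.e.\ derivative bound into the integrated modulus of continuity $|c(t)-c(t')|\leq sK\left[(T_m-t)^{-1/s}-(T_m-t')^{-1/s}\right]$, apply the Arzel\`a--Ascoli theorem to extract a limit $c$ in $L^{\infty}_{\loc}([0,T_m))$, pass the integrated inequality to the limit to get $c\in\Lip_{\loc}([0,T_m))$, and recover the pointwise bound $|c'(t_0)|\leq K(T_m-t_0)^{-(s+1)/s}$ by a difference-quotient limit. The only cosmetic differences are that the paper differentiates via a symmetric quotient of the explicit antiderivative while you use one-sided quotients with Lebesgue differentiation (overkill here, as the density is continuous), and your diagonal extraction over an exhaustion is just the standard bookkeeping behind convergence in $L^{\infty}_{\loc}$.
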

	
Before showing these lemmas, let us first demonstrate how they entail the proof of Theorem \ref{thm:Lower bound Tm}.

\begin{proof}[Proof of Theorem {\rm \ref{thm:Lower bound Tm}}]
By Lemmas \ref{l:ThetaContinuous} and \ref{l:KConvexCompact}, it follows from the Schauder-Tychonoff theorem that the mapping $\Theta$ in \eqref{eq:Theta} has a fixed point $c_0$ in $\mathcal{K}$. Consequently, the solution $v(t, x)$ to the Cauchy problem \eqref{EQ:Linear} with $c = c_0$ and initial data $(u_0, u_1)$ is also a solution $u = u(t, x)$ to the non-linear Cauchy problem \eqref{EQ:Kirchhoff} with initial data $(u_0, u_1)$ on $[0, T_m]$. Hence $u$ exists at the endpoint $T_m$, so that $\mathcal{E}_{3/2}(u; T_m) <+ \infty$, contradicting Proposition \ref{prop:G-local}. Therefore, we may conclude that \eqref{eq:Lower bound Tm} holds.
\end{proof}

We now move on to prove the lemmas. 

\begin{proof}[Proof of Lemma {\rm \ref{l:ThetaContinuous}}]
We first show that $\Theta$ is well-defined, i.e. that for every $c \in \mathcal{K}$ also $c_{v} \in \mathcal{K}$. It is clear that $c_{v} \in \mathrm{Lip}_{\mathrm{loc}}([0,T_m))$, and by the definition of $\varphi^{*}$ we have that 
$$\nu_{0} \leq c_{v}(t) \leq M$$ for all $t \in [0, T_{m}]$. 

For the derivative, take 
	\begin{equation} 
		\label{eq:etaprime}
		\eta' = \eta - (K s + 2M) \nu^{-1}_{0} > 0 ,
	\end{equation}
then, by Proposition \ref{prop:G-Colombini}, we deduce that almost everywhere
	\begin{align*} 
		|c'_{v}(t)| 
		&= \left| (\varphi^{*})^{\prime}(\|\nabla v(t)\|_{L^{2}}^{2}) \cdot 2 \real \left( (-\Delta)^{\frac{3}{4}} v(t), \partial_{t} (-\Delta)^{\frac{1}{4}} v(t) \right)_{L^{2}} \right| \\
		&\leq 2 |\varphi^{\prime}(\|\nabla v(t)\|_{L^{2}}^{2})| \|v(t)\|_{\dot{H}^{\frac{3}{2}}} \|\partial_{t} v(t)\|_{\dot{H}^{\frac{1}{2}}} \\
		&\leq 2 L \|(-\Delta)^{\frac{3}{4}} v(t)\|_{\gamma^{s}_{\eta', L^{2}}} \|\partial_{t} (-\Delta)^{\frac{1}{4}} v(t)\|_{\gamma^{s}_{\eta', L^{2}}} \\
		&\leq \frac{2 L \max(1, M)}{\min(1, \nu_{0})} e^{2 \nu_{0}^{-1} M} \left\| \left((-\Delta)^{\frac{3}{4}} u_{0},
		 (-\Delta)^{\frac{1}{4}} u_{1} \right) \right\|^2_{\gamma^{s}_{\eta, L^{2}} \times \gamma^{s}_{\eta, L^{2}}} \\
		&= K / T_{m}^{\frac{s + 1}{s}} .
	\end{align*}
On the other hand, it trivially holds that
	\[ 1 = \frac{T_{m}^{\frac{s + 1}{s}}}{T_{m}^{\frac{s + 1}{s}}} \leq \frac{T_{m}^{\frac{s + 1}{s}}}{(T_{m} - t)^{\frac{s + 1}{s}}} . \]
Combining these two estimates together, we find that almost everywhere
	\[ |c'_{v}(t)| \leq \frac{K}{(T_{m} - t)^{\frac{s + 1}{s}}} .  \]
Consequently, $c_{v} \in \mathcal{K}$, so that $\Theta$ is well-defined.

Next, we show that $\Theta$ is continuous. To do this, let us take a sequence $(c_k(t))_{k \in \N}$ in $\mathcal{K}$ such that 
\[ c_k(t) \to c(t) \in \mathcal{K} \quad \text{in } L^{\infty}_{\loc}([0,T_m)), \qquad k \to \infty , \]
and let $v_k(t,x)$ and $v(t,x)$ be the corresponding solutions to the linear Cauchy problem \eqref{EQ:Linear} with the coefficients $c_k(t)$ and $c(t)$, respectively. 
Then it is sufficient to  prove that the images $\tilde{c}_k(t) :=\Theta(c_k(t))$ and $\tilde{c}(t):=\Theta(c(t))$ satisfy 
	\begin{equation} 
		\label{EQ:small-S-convergence}
		\tilde{c}_k(t) \to \tilde{c}(t) \quad \text{in } L^{\infty}_{\loc}([0,T_m)), \qquad k \to \infty. 
	\end{equation} 
The functions $w_k:=v_k-v$, $k=1,2,\ldots$, solve the linear Cauchy problems
	\[ 
		\begin{cases} 
			\partial^2_t w_k-c(t) \Delta w_k = \left\{ c_k(t)-c(t) \right\} \Delta v_k, & (t,x) \in (0,T_m) \times \Rn , \\
			w_k(0,x)=0, \quad \partial_t w_k(0,x)=0, & x \in \Rn.
		\end{cases} 
	\]
We define the energies
		\[ \mathcal{E}_{w_k}(t) = \Vert \pa_t w_k(t) \Vert^{2}_{L^2} + c(t) \| \nabla w_k(t) \|^{2}_{L^2} . \]
Then, for $\eta'$ as in \eqref{eq:etaprime}, differentiating gives, by Proposition \ref{prop:G-Colombini},
	\begin{align*}
		\mathcal{E}_{w_{k}}^{\prime}(t) 
		&= 2\left\{c_k(t)-c(t) \right\} \real\left(\Delta v_k(t),\pa_t w_k(t)\right)_{L^2}+ c^{\prime}(t) \left\| \nabla w_k(t) \right\|^{2}_{L^2} \\
		&\leq 2\left\vert c_k(t)-c(t) \right\vert \| v_k(t) \|_{\dot{H}^{\frac32}} \| \pa_t w_k(t) \|_{\dot{H}^{\frac12}} + \frac{|c^{\prime}(t)|}{c(t)} \mathcal{E}_{w_k}(t) \\
		&\leq 2\left\vert c_k(t)-c(t) \right\vert \| (-\Delta)^{\frac{3}{4}} v_k(t) \|_{\gamma^{s}_{\eta', L^{2}}} \cdot \\ 
		&\hspace{2cm} \left(\| \pa_t (-\Delta)^{\frac{1}{4}} v_k(t) \|_{\gamma^{s}_{\eta', L^{2}}} + \| \pa_t (-\Delta)^{\frac{1}{4}} v(t) \|_{\gamma^{s}_{\eta', L^{2}}} \right)  + \frac{|c^{\prime}(t)|}{c(t)} \mathcal{E}_{w_k}(t) \\
		&\leq \frac{4 \max(1, M)}{\min(1, \nu_{0})} e^{2 \nu_{0}^{-1} M} \left\vert c_k(t)-c(t) \right\vert \cdot \\
		&\hspace{2cm} \left\| \left((-\Delta)^{\frac{3}{4}} u_{0}, (-\Delta)^{\frac{1}{4}} u_{1} \right) \right\|_{\gamma^{s}_{\eta, L^{2}} \times \gamma^{s}_{\eta, L^{2}}} + \frac{|c^{\prime}(t)|}{c(t)} \mathcal{E}_{w_k}(t) .
	\end{align*}
By integrating the previous inequality an applying Gr\"onwall's inequality, we obtain the bound
	\begin{multline*}
		\mathcal{E}_{w_{k}}(t)
		\leq \frac{4 \max(1, M)}{\min(1, \nu_{0})} e^{2 \nu_{0}^{-1} M} \left\| \left((-\Delta)^{\frac{3}{4}} u_{0}, (-\Delta)^{\frac{1}{4}} u_{1} \right) \right\|_{\gamma^{s}_{\eta, L^{2}} \times \gamma^{s}_{\eta, L^{2}}} \cdot \\
		\int_{0}^{t} |c_k(\tau) - c(\tau)| d\tau \exp\left( \int_{0}^{t} \frac{|c^{\prime}(\tau)|}{c(\tau)} d\tau \right) ,
	\end{multline*}
for $t \in [0, T_m)$. Consequently,
	\[ 
		\left. 
			\begin{gathered} 
				\nabla v_k(t) \to \nabla v(t) \\ 
				\pa_t v_k(t) \to \pa_t v(t) 
			\end{gathered} 
		\right\} 
		\quad 
		\text{in } L^{\infty}_{\loc}([0,T_m); L^2(\Rn)) \text{ as } k \to \infty .
	\]
Hence we obtain \eqref{EQ:small-S-convergence}, proving the continuity of $\Theta$.
\end{proof}

\begin{proof}[Proof of Lemma {\rm \ref{l:KConvexCompact}}]
As $\mathcal{K}$ is clearly convex, it suffices to show that $\mathcal{K}$ is compact. Now, let $(c_k)_{k \in \N}$ be a sequence in $\mathcal{K}$. Observe that
	\[ c_{k}(t) - c_{k}(t') = \int_{t'}^{t} c'_{k}(\tau) d\tau , \]
so that
	\[ |c_{k}(t) - c_{k}(t')| \leq s K \left[ \frac{1}{(T_m - t)^{1/s}} - \frac{1}{(T_m - t')^{1/s}} \right] , \]
for any $0 \leq t' < t < T_m$. As $1 / (T_m - \cdot)^{1/s}$ is uniformly continuous on any compact interval of $[0, T_m)$, the sequence $(c_k)_{k \in \N}$ is equicontinuous on that interval. Hence, by the Ascoli-Arzel\`a theorem, the sequence $(c_k)_{k \in \N}$ has a convergent subsequence $(c_{k_n})_{n \in \N}$ in $L^{\infty}_{\loc}([0, T_m))$ with limit $c \in L^{\infty}_{\loc}([0, T_m))$. To conclude the proof, it suffices to show that $c \in \mathcal{K}$. Clearly, $\nu_{0} \leq c(t) \leq M$ for every $t \in [0, T_m]$. Also, for any $0 \leq t' < t < T_m$ we have
	\[ |c(t) - c(t')| \leq s K \left[ \frac{1}{(T_m - t)^{1/s}} - \frac{1}{(T_m - t')^{1/s}} \right] . \]
Note that this already implies that $c \in \mathrm{Lip}_{\loc}([0,T_m))$ as $1 / (T_m - \cdot)^{1/s} \in \mathrm{Lip}_{\loc}([0,T_m))$. Whence, $c$ is almost everywhere differentiable on $[0, T_m)$. Let $t_0 \in [0, T_m)$ be a point where $c'(t_0)$ exists. For $h > 0$ small enough, we then have
	\[ \left| \frac{c(t_0 + h) - c(t_0 - h)}{2h} \right| \leq \frac{s K}{2h} \left[ \frac{1}{(T_m - t_0 - h)^{1/s}} - \frac{1}{(T_m - t_0 + h)^{1/s}} \right] , \]
so that by taking the limit $h \rightarrow 0^{+}$, we find
	\[ |c'(t_0)| \leq \frac{K}{(T_m - t_0)^{\frac{s + 1}{s}}} . \]
We may conclude that $c \in \mathcal{K}$, which completes the proof.
\end{proof}

We end this section with the following remark.

\begin{remark} \label{remark: improvement lower bound}
The lower bound given in \eqref{eq:Lower bound Tm} is strictly larger the one in \eqref{eq:classical lower bound} if and only if
	\[ \eta > 2 M \nu_{0}^{-1} + C_{s} \frac{\displaystyle{\int_{\Rn} e^{\eta |\xi|^{\frac{1}{s}}} \left[ |\xi|^{3} |(\mathcal{F} u_{0})(\xi)|^{2} + |\xi| |(\mathcal{F} u_{1})(\xi)|^{2} \right] d\xi}}{\displaystyle{\left( \int_{\Rn} \varphi(\|\nabla u_0\|^{2}_{L^{2}}) |\xi|^{3} |(\mathcal{F} u_{0})(\xi)|^{2} + |\xi| |(\mathcal{F} u_{1})(\xi)|^{2} \, d\xi \right)^{\frac{s + 1}{s}}}} , \]
with
	\[ C_{s} := \frac{\max(M, 1)}{\min(\nu_{0}, 1)} 2sL e^{2 \nu_{0}^{-1} M} \left( \frac{\nu_{0}^{3/2}}{4 L} \right)^{\frac{s + 1}{s}} . \]
\end{remark}

\end{document}